\documentclass{article}
\usepackage{array,amscd,amsfonts,amsmath,amssymb,amsthm,graphics,mathrsfs,multirow,verbatim,enumitem,hyperref,algorithm}
\usepackage[noend]{algpseudocode}
\usepackage[utf8]{inputenc}
\usepackage[english]{babel}

\begingroup
    \makeatletter
    \@for\theoremstyle:=plain,definition,remark\do{%
        \expandafter\g@addto@macro\csname th@\theoremstyle\endcsname{%
            \addtolength\thm@preskip\parskip
            }%
        }
\endgroup
\usepackage[parfill]{parskip}
\usepackage{geometry}
\theoremstyle{plain}
\newtheorem{theorem}{Theorem}

\newtheorem{proposition}[theorem]{Proposition}
\theoremstyle{definition}
\newtheorem{definition}{Definition}

\newtheorem{strategy}{Strategy}
\theoremstyle{remark}
\newtheorem{remark}{Remark}

\begin{document}
\sloppy

\title{\Large{\textbf{A GENERALIZED COVER'S PROBLEM}}}
\author{\normalsize{BENJAMIN E. DIAMOND}}
\date{}
\maketitle

\begin{abstract}
Generalizing a problem posed by Cover \cite{Cover:1987aa}, we propose an adversarial game in which a permutation is incrementally constructed in a setting of partial information. As in the secretary problem, this permutation is exposed in stages via the successive components of its Lehmer code. Extending Cover's result, which constitutes the case $n = 2$, we establish that a random permutation of $n$ adversarially constructed real numbers can be reconstructed with better-than-random probability, provided that certain among the numbers it permutes are made visible during the process.
\end{abstract}

\section{Introduction}
\begin{quote}
Player 1 writes down any two distinct numbers on separate clips of paper. Player 2 randomly chooses one of these slips of paper and looks at the number. Player 2 must decide whether the number in his hand is the larger of the two numbers. He can be right with probability one-half. It seems absurd that he can do better.

We argue that Player 2 has a strategy by which he can correctly state whether or not the other number is larger or smaller than the number in his hand with probability \textit{strictly greater than one-half}.
\end{quote}
Thomas Cover's well-known paradox \cite{Cover:1987aa}, reproduced in part above, asserts that one can identify the larger of two unequal numbers chosen by an adversary, having seen only one of them, with probability surpassing that of a randomized strategy.

This setting has been generalized in a number of directions. Most prominent among these is the \textit{secretary problem}, or, more properly, the game of \textit{googol}---distinguished by Player 2's seeing the actual values of the numbers as they come, as opposed their relative ranks alone---identified as such apparently by Ferguson \cite{Ferguson:1989aa}, and solved ultimately by Samuels \cite{Samuels:1981aa}, Ferguson \cite{Ferguson:1989aa}, Silverman and N\'{a}das \cite{Silverman:1990aa}, and Gnedin \cite{Gnedin:1994aa}, among others. The problem's classical solution, based only on relative ranks, performs well; as long as $n > 2$, however, no additional---or rather, actionable---information is conveyed by the numbers' actual values (see especially \cite[4. Final remarks]{Gnedin:1994aa}).

Further extensions are surveyed in Ferguson \cite{Ferguson:1989aa}. Modern generalizations include Kesselheim, Radke, T\"{o}nnis, and V\"{o}cking \cite{Kesselheim:2013aa}, which studies weighted matching to multiple targets, and Gnedin \cite{Gnedin:2016aa}, which studies guessing which among two \textit{piles} of numbers \textit{contains} the largest number.

The extension of Cover's problem to the game of googol entails viewing Player 2's choice as that mandated by a stopping procedure, which seeks, at each stage $k$, for $k \in \{1, \ldots , n - 1\}$, to infer properties of an unknown permutation $\sigma \in \mathbf{S}_n$ given only the first $k$ elements of its Lehmer code. Precisely, in googol, Player 2 seeks to stop exactly when $k$ is such that $\sigma(n) = k$.

We propose a different generalization, in which, at each such stage $k$, Player 2 attempts to determine the \textit{next} element of $\sigma$'s Lehmer code. (This version also recovers Cover's problem in the case $n = 2$.) About this $k + 1$\textsuperscript{st} element, of course, no information is conveyed by the first $k$. Contrarily to the case of googol, however, access to the numbers, in this problem, continues to bestow an advantage, even for $n$ greater than $2$. Indeed, generalizing Cover's result, we demonstrate that, for arbitrary $n \geq 2$, $n$ adversarially generated real numbers permuted by a uniformly random permutation $\sigma$ can be used to reconstruct $\sigma$ (through its Lehmer code, in $n - 1$ steps) with probability greater than $\frac{1}{n!}$, provided that, at each stage $k$, the first $k$ of these permuted numbers are made visible.

This direction carries forward the spirit of Cover's original observation, and aims to answer his question: ``Does this result generalize?'' In an appendix, we discuss further extensions to the case $n = 2$.

\subsection{The classical case $n = 2$}
We summarize the classical treatment of Cover. Cover suggests the following strategy \cite[Solution]{Cover:1987aa}:

\begin{strategy}[Cover] \label{cover}
Player 2 fixes a real-valued random variable $T$ which is absolutely continuous with everywhere-positive density. Selecting a random variate $t$ from $T$, and calling the number he chooses $x$, Player 2 states that the other number is smaller than $x$ just in case $t \leq x$. Otherwise, he states that it is larger.
\end{strategy}

We establish the correctness of Strategy \ref{cover} \textit{a fortiori} by realizing it as a special case of a slightly more general strategy:

\begin{strategy} \label{increasing}
Player 2 picks a strictly increasing function $f \colon \mathbb{R} \rightarrow [0, 1]$. Calling the number he chooses $x$, Player 2 states that the other number is smaller than $x$ with probability $f(x)$. Otherwise, he states that it is larger.
\end{strategy}

Any instance of Strategy \ref{cover}, with random variable $T$ say, can be realized as an instance of Strategy \ref{increasing} by setting $T$'s cumulative distribution function as $f$.

\begin{proposition} \label{case2}
Strategy \ref{increasing} is dominant for Player 2 in the above game.
\end{proposition}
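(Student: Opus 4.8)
The plan is to collapse the entire game into one scalar inequality. Fix an arbitrary play of Player 1: two distinct reals which, since the game imposes no order on them, we may harmlessly name $a$ and $b$ with $a < b$. Player 2 then receives a number $X$ that is uniformly distributed on $\{a, b\}$, and, independently of $X$, draws the internal randomness governing his announcement; upon seeing $x$, he declares ``the other number is smaller'' with probability $f(x)$ and ``larger'' otherwise. All that must be shown is that the resulting probability of a correct declaration exceeds $\tfrac12$, and that this beats the naive randomized strategy against \emph{every} such play.

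The key step is to condition on $X$. On the event $\{X = b\}$, Player 2 holds the larger number, hence is correct exactly when he says ``smaller'', which has conditional probability $f(b)$; on the event $\{X = a\}$, he is correct exactly when he says ``larger'', which has conditional probability $1 - f(a)$. As each of these events carries probability $\tfrac12$, the total probability that Player 2 answers correctly is
\[
\tfrac12 f(b) + \tfrac12\bigl(1 - f(a)\bigr) = \tfrac12 + \tfrac12\bigl(f(b) - f(a)\bigr).
\]

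Now I would invoke the one substantive hypothesis, that $f$ is \emph{strictly} increasing: since $a < b$ this forces $f(b) - f(a) > 0$, so the displayed quantity is strictly greater than $\tfrac12$. Because $(a,b)$ was an arbitrary play of Player 1, and because the randomized strategy attains exactly $\tfrac12$ against every play, Strategy \ref{increasing} weakly dominates it (indeed strictly improves on it play-by-play); passing from pure plays of Player 1 to an arbitrary distribution over plays changes nothing, by linearity of expectation. One may further note that the computation shows the converse flavor of the statement as well: a strategy of the form ``declare smaller with probability $g(x)$'' beats the randomized strategy against every adversary precisely when $g$ is strictly increasing, which is why Strategy \ref{increasing} is the natural class to single out. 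Finally, Strategy \ref{cover} is recovered as the case $f = F_T$, the cumulative distribution function of $T$, which is strictly increasing exactly because $T$ has everywhere-positive density.

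There is essentially no analytic obstacle here; the care required is purely in the bookkeeping. The points I would make explicit are: that Player 2's announcement coin is independent of $X$ (so the conditional probabilities above are genuinely $f(b)$ and $1-f(a)$); that it is the \emph{strict} monotonicity of $f$, not mere monotonicity, that upgrades ``$\geq \tfrac12$'' to ``$> \tfrac12$''; and that ``dominant'' here means a strict improvement over the randomized strategy for each fixed play, not a uniform one — Player 1 can drive the margin $\tfrac12\bigl(f(b) - f(a)\bigr)$ as close to $0$ as desired by taking $a$ and $b$ close together — so no positive lower bound on the advantage is claimed, and none should be.
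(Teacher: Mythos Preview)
Your proof is correct and essentially identical to the paper's: both condition on which of the two numbers is drawn, obtain $\tfrac12 f(\text{larger}) + \tfrac12(1 - f(\text{smaller})) = \tfrac12 + \tfrac12(f(\text{larger}) - f(\text{smaller}))$, and invoke strict monotonicity of $f$. Your additional remarks on the converse, on mixed adversary strategies, and on the non-uniformity of the margin are accurate elaborations but go beyond what the paper records.
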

\begin{proof}
We name Player 1's two numbers, which are distinct, $s$ and $l$, for \textit{smaller} and \textit{larger}. Breaking down the probability of winning along the two outcomes of Player 2's random choice, we have that:
\begin{align*}
P(\text{win}) &= P(\text{chooses } l) \cdot P(\text{states ``smaller''}) + P(\text{chooses } s) \cdot P(\text{states ``larger''}) \\
&= \frac{1}{2} \cdot f(l) + \frac{1}{2} \left( 1 - f(s) \right) \\
&= \frac{1}{2} + \frac{f(l) - f(s)}{2} \\
&> \frac{1}{2},
\end{align*}
where in the final step we use the increasingness of $f$.
\end{proof}

\subsection{Notations and terminology}

We prepare definitions which will be key in what follows. We fix an $n \geq 2$ once and for all.

We begin with the following correspondence, often called the \textit{Lehmer code}, and originating apparently with Hall (see Sedgewick \cite[pp. 154-155]{Sedgewick:1977aa}):

\begin{definition}[Lehmer code]
To each element $\sigma \in \mathbf{S}_n$, viewed as a bijection on $\{1, \ldots , n\}$, we associate a tuple $(c_1, \ldots , c_n)$, which we will call $\sigma$'s \textit{Lehmer code}, by setting $c_1 = 0$ and declaring, for each $k \in \{1, \ldots , n - 1\}$, that $c_{k + 1} \in \{0, \ldots , k \}$ be the cardinality
\begin{equation*}\left| \left\{ i \in \{1, \ldots , k \} \mid \sigma^{-1}(i) < \sigma^{-1}(k + 1) \right\} \right|.\end{equation*}
Conversely, any such tuple $(c_1, \ldots , c_n)$ corresponds to the permutation given by $\sigma \colon i \mapsto P[i]$, where the array $P[i], i \in \{1, \ldots , n \}$ is defined through the following procedure (see Sedgewick \cite[p. 155]{Sedgewick:1977aa}):
\begin{algorithm}[H]
\begin{algorithmic}[1]
\Procedure{Lehmer}{$c$, $P$}:
\For{$i = 1 \text{ to } n$}:
\For{$j = 1 \text{ to } c_i$}:
\State $P[n - i + j] \gets P[n - i + j + 1]$
\EndFor
\State $P[n - i + c_i + 1] \gets i$
\EndFor
\EndProcedure
\end{algorithmic}
\end{algorithm}

In what follows, we often freely identify elements $\sigma \in \mathbf{S}_n$ with Lehmer codes $(c_1, \ldots , c_n)$.
\end{definition}

Intuitively, each successive element $c_{k + 1}$ encodes the relative position with respect to the numbers $\{1, \ldots , k\}$ that $k + 1$ occupies, in the list $\sigma(1), \ldots , \sigma(n)$. An example is visible in Sedgewick \cite[p. 155]{Sedgewick:1977aa}.

\begin{definition}
By the \textit{standard $k$-simplex} $\Delta^k$ we will mean the set:
\begin{equation*}\Delta^k = \left\{ (p_0, \ldots , p_k) \in \mathbb{R}^{k + 1} \mid \sum_{i = 0}^k p_i = 1, p_i \geq 0 \text{ for all } i \right\}.\end{equation*}
We identify categorical distributions on $\{0, \ldots , k\}$ with elements of this set.
\end{definition}

\section{Results}

Continuing an apparent tradition, we describe the key game in verbal form.

\begin{quote}
Player 1 writes down any $n$ distinct numbers on separate clips of paper. Player 2 randomly chooses one of these slips of paper and looks at its number. As long as there remain further slips on the table, now, Player 2 repeatedly enacts a procedure whereby, randomly selecting a further slip, he must guess its hidden number's ordinal position with respect to the collection of slips which are already visible, before, finally, revealing this slip's number, continuing to the next only if his guess had been correct.

We argue that Player 2 has a strategy by which he can correctly order the entire collection of $n$ slips with probability \textit{strictly greater than 1 divided by $n$ factorial}.
\end{quote}

We enrich this game with definitions. Writing without loss of generality the adversary's (real) numbers as $x_1 > \cdots > x_n$, we denote by $\sigma \in \mathbf{S}_n$ that unique permutation for which these slips' numbers are, in the order in which they are revealed, $x_{\sigma^{-1}(1)}, \ldots , x_{\sigma^{-1}(n)}$. We index the stages of the game by $k \in \{1, \ldots , n - 1\}$. At each stage $k$, then, the numbers $x_{\sigma^{-1}(1)}, \ldots , x_{\sigma^{-1}(k)}$ are visible, as are, hence, the elements $(c_1, \ldots , c_k)$ of $\sigma$'s Lehmer code. Player 2's task in this stage $k$ is exactly to guess, on the basis of these visible numbers, the element $c_{k + 1}$ of $\sigma$'s Lehmer code. Player 2 wins if he completely reconstructs $\sigma$.

We point out that strategies in this game are effectively family of maps $\left\{ f_k \colon \mathbb{R}^k \rightarrow \Delta^k \right\}_{k \in \{1, \ldots , n - 1\}}$. We propose the following particular strategy:

\begin{strategy} \label{strategy}
At each stage $k \in \{1, \ldots , n - 1\}$, Player 2 guesses $c_{k + 1}$ on the basis of a sample from that categorical distribution on $\{0, \ldots , k\}$ given as the image of $x_{\sigma^{-1}(1)}, \ldots , x_{\sigma^{-1}(k)}$ under the map:
\begin{equation*}f_k \colon \left( \eta_1, \ldots , \eta_k \right) \mapsto \text{softmax} \left( 0, \eta_{\sigma_k(1)} \ldots , \eta_{\sigma_k(k)} \right) = \frac{(1, e^{\eta_{\sigma_k(1)}}, \ldots , e^{\eta_{\sigma_k(k)}})}{1 + e^{\eta_{\sigma_k(1)}} + \cdots + e^{\eta_{\sigma_k(k)}}},\end{equation*}
where, for each $(\eta_1, \ldots , \eta_k) \in \mathbb{R}^k$, $\sigma_k \in \mathbf{S}_k$ is chosen to be that unique permutation for which $\eta_{\sigma_k(1)} > \cdots > \eta_{\sigma_k(k)}$. In other words, at each stage, Player 2 sorts his visible numbers in descending order, before prepending a leading zero and applying the softmax.
\end{strategy}

\begin{remark}
The permutation $\sigma_k \in \mathbf{S}_k$ as defined above coincides with that given by the truncated Lehmer code $(c_1, \ldots , c_k)$ available as of stage $k$.
\end{remark}

\begin{remark} \label{genstrat}
Considering in particular the case $k = 1$, we point out that $f_1$ coincides with the logistic function $x_1 \mapsto \frac{e^{x_1}}{1 + e^{x_1}}$, provided that the $1$-simplex is identified with the unit interval $[0, 1]$. As this function is increasing, we see that Strategy \ref{strategy} generalizes this particular instance of Strategy \ref{increasing}.
\end{remark}

\begin{remark} \label{exponential}
This strategy appears related to the theory of exponential families. Indeed, interpreting the point $\left( x_{\sigma^{-1}(1)}, \ldots , x_{\sigma^{-1}(k)} \right) = (\eta_1, \ldots , \eta_k) = \left( \text{log}\left(\frac{p_1}{p_0}\right), \ldots , \text{log}\left(\frac{p_k}{p_0}\right) \right) \in \mathbb{R}^k$ as the natural parameter of an underlying categorical distribution $(p_0, \ldots , p_k) \in \Delta^k$, we realize Strategy \ref{strategy} as a map from the $k$-simplex to itself (we refer for example to \cite[1.5, Ex. 5.3 and 3.6, (6.2)]{Lehmann:1998aa}). This map is exactly that which sorts in descending order the trailing (that is, nonzero-indexed) coordinates.
\end{remark}

We come to the main theorem:

\begin{theorem}
Strategy \ref{strategy} is dominant for Player 2 in the generalized game above.
\end{theorem}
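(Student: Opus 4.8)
The plan is to argue by induction on $n$, proving that $W(x_1,\ldots,x_n) > \frac{1}{n!}$ for every choice of adversarial numbers $x_1 > \cdots > x_n$, where $W(x_1,\ldots,x_n)$ denotes the probability that Player~2, following Strategy~\ref{strategy}, reconstructs $\sigma$. Throughout I would set $x_0 := 0$ and $Y := 1 + \sum_{i=1}^n e^{x_i}$, and begin with a reformulation of $W$. Since $\sigma$, hence $\sigma^{-1}$, is uniform, the revealed numbers $\bigl(x_{\sigma^{-1}(1)},\ldots,x_{\sigma^{-1}(n)}\bigr)$ form a uniformly random arrangement of $(x_1,\ldots,x_n)$; Player~2's guesses at the successive stages are, conditionally on $\sigma$, mutually independent; and his stage-$k$ guess agrees with $c_{k+1}$ with conditional probability $\bigl[f_k\bigl(x_{\sigma^{-1}(1)},\ldots,x_{\sigma^{-1}(k)}\bigr)\bigr]_{c_{k+1}}$. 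Moreover, since a smaller index among $x_1 > \cdots > x_n$ means a larger value, the Lehmer component $c_{k+1}$ is exactly the number of already-revealed values that exceed the next one, $x_{\sigma^{-1}(k+1)}$; and under $f_k$ the coordinate indexed by $r \in \{1,\ldots,k\}$ carries softmax weight $e^w$, with $w$ the $r$\textsuperscript{th}-largest revealed value, while the coordinate $r = 0$ carries weight $e^0 = 1$. Thus $W(x_1,\ldots,x_n)$ is the expectation, over the uniform arrangement, of $\prod_{k=1}^{n-1}\bigl[f_k(\cdots)\bigr]_{c_{k+1}}$.

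The heart of the proof is a recursion obtained by conditioning on which number lands on the \emph{last} slip. Suppose this number is $x_m$, an event of probability $\frac{1}{n}$ after which the first $n-1$ slips carry a uniformly random arrangement of $\{x_i : i \neq m\}$. For each stage $k \leq n-2$ the visible values and the target $c_{k+1}$ involve no number outside $\{x_i : i \neq m\}$, so the product of the first $n-2$ factors is, verbatim, the integrand defining $W\bigl(\{x_i : i \neq m\}\bigr)$, and its conditional expectation is $W\bigl(\{x_i : i \neq m\}\bigr)$. At the final stage all of $\{x_i : i \neq m\}$ is visible, so $c_n$ is the deterministic value $m-1$ --- the number of visible values exceeding $x_m$ --- and the matching softmax coordinate, namely the one attached to the $(m-1)$\textsuperscript{th}-largest visible value (or to the leading $0$ when $m = 1$), equals $\frac{e^{x_{m-1}}}{1 + \sum_{i \neq m} e^{x_i}} = \frac{e^{x_{m-1}}}{Y - e^{x_m}}$. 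As this last factor is constant given the identity of the final number, assembling the pieces yields
\begin{equation*}
W(x_1,\ldots,x_n) = \frac{1}{n}\sum_{m=1}^n W\bigl(\{x_i : i \neq m\}\bigr)\cdot\frac{e^{x_{m-1}}}{Y - e^{x_m}},
\end{equation*}
where $W$ of a one-element set is read as $1$.

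The final step is to estimate the weights. Since $x_m \geq x_n$ one has $Y - e^{x_m} \leq Y - e^{x_n}$ for every $m$, so
\begin{equation*}
\sum_{m=1}^n \frac{e^{x_{m-1}}}{Y - e^{x_m}} \geq \frac{1}{Y - e^{x_n}}\sum_{m=1}^n e^{x_{m-1}} = \frac{1 + e^{x_1} + \cdots + e^{x_{n-1}}}{Y - e^{x_n}} = 1,
\end{equation*}
the inequality being strict as soon as $n \geq 2$ (the summand $m = n-1$ already contributes a strict term). For $n = 2$ the recursion then reads $W(x_1,x_2) = \frac{1}{2}\sum_{m=1}^2 \frac{e^{x_{m-1}}}{Y - e^{x_m}} > \frac{1}{2} = \frac{1}{2!}$, furnishing the base case (in agreement with Proposition~\ref{case2}, of which Strategy~\ref{strategy} is here an instance by Remark~\ref{genstrat}); for $n \geq 3$ the inductive hypothesis gives $W\bigl(\{x_i : i \neq m\}\bigr) > \frac{1}{(n-1)!}$ for each $m$, whence $W(x_1,\ldots,x_n) > \frac{1}{n!}\sum_{m=1}^n \frac{e^{x_{m-1}}}{Y - e^{x_m}} \geq \frac{1}{n!}$. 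This completes the induction. The one step I expect to require genuine care is the passage to the recursion: it is exactly the conditioning on the last-revealed number that renders $c_n$ deterministic and exhibits its softmax weight in the shape $\frac{e^{x_{m-1}}}{Y - e^{x_m}}$ that the telescoping estimate exploits --- everything else is bookkeeping.
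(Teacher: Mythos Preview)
Your argument is correct and follows essentially the same route as the paper: both condition on the last-revealed number to obtain a recursion, reduce everything to the inequality $\sum_{m=1}^{n}\frac{e^{x_{m-1}}}{Y-e^{x_m}}>1$ (the paper's inequality (\ref{inequality}) after the simplex substitution), and prove that by replacing each denominator with the common bound $Y-e^{x_n}$. Your presentation is slightly more streamlined---you induct on $n$ directly rather than on trailing Lehmer codes within a fixed $n$, and you work with the exponentials $e^{x_i}$ in place of the paper's coordinates $p_i$---but the substance is identical.
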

\begin{proof}
We show that, for any numbers $x_1 > \cdots > x_n$, the uniform weighted average
\begin{equation}\sum_{\sigma \in \mathbf{S}_n} \frac{1}{n!} \cdot \prod_{k = 1}^{n - 1} f_k \left( x_{\sigma^{-1}(1)}, \ldots , x_{\sigma^{-1}(k)} \right)(c_{k + 1}) > \frac{1}{n!},\label{sop} \end{equation}
where, for each $\sigma$ and at each stage $k$, $c_{k + 1}$ is the (as yet unknown) $k + 1$\textsuperscript{st} component of $\sigma$'s Lehmer code. We multiply both sides of this equation by $\frac{1}{n!}$ once and for all. Noting also that the case $n = 2$ is proven by Proposition \ref{case2} above, we assume that $n \geq 3$ in what follows.

We observe first that as, at each stage $k$, $f_k$ immediately sorts its arguments $x_{\sigma^{-1}(1)}, \ldots , x_{\sigma^{-1}(k)}$, we may as well view $f_k$ as a function on the unordered collection of these arguments.

We now note that, for each fixed $d_n \in \{0, \ldots , n - 1\}$, those permutations $\sigma = (c_1, \ldots , c_n) \in \mathbf{S}_n$ for which $c_n = d_n$ yield identical (unordered) sets $\{x_{\sigma^{-1}(1)}, \ldots , x_{\sigma^{-1}(n - 1)}\}$ (namely, they equal $\{x_1, \ldots , x_n\} \backslash \{x_{d_n + 1}\}$). As a first consequence, we rewrite the left-hand side of the inequality (\ref{sop}):
\begin{equation}\sum_{d_n \in \{0, \ldots , n - 1\}} \left( \sum_{\substack{\sigma = (c_1, \ldots , c_n) \in \mathbf{S}_n \\ c_n = d_n}} \prod_{k = 1}^{n - 2} f_k \left( x_{\sigma^{-1}(1)}, \ldots , x_{\sigma^{-1}(k)} \right)(c_{k + 1}) \right) \cdot f_{n - 1} \left( \{x_1, \ldots , x_n\} \backslash \{x_{d_n + 1}\} \right)(d_n). \label{firstbreakdown}\end{equation}

In fact, for any fixed \textit{trailing Lehmer code} $(d_{m + 1}, \ldots , d_n)$, $m \in \{1, \ldots , n - 1\}$, those permutations $\sigma = (c_1, \ldots , c_n) \in \mathbf{S}_n$ for which $c_{m + 1} = d_{m + 1}, \ldots , c_n = d_n$ yield identical chains of sets $\{x_{\sigma^{-1}(1)}, \ldots , x_{\sigma^{-1}(m)}\} \subset \cdots \subset \{x_{\sigma^{-1}(1)}, \ldots , x_{\sigma^{-1}(n)}\}$ (for which, however, when $m < n - 1$ no convenient closed-form expression appears available).  

Using this observation, we exhibit a general recursive substructure of the form (\ref{firstbreakdown}). (We assume $n > 3$ in what follows, for notational convenience. The case $n = 3$ is effectively exhausted by the expression (\ref{firstbreakdown}) above.) For any trailing code $(d_{m + 1}, \ldots , d_n)$, $m \in \{3, \ldots , n - 1\}$, then, we have that:
\begin{align} \label{inductive}
\begin{split}
& \sum_{\substack{\sigma = (c_1, \ldots , c_n) \in \mathbf{S}_n \\ c_{m + 1} = d_{m + 1}, \ldots c_n = d_n}} \prod_{k = 1}^{m - 1} f_k \left( x_{\sigma^{-1}(1)}, \ldots , x_{\sigma^{-1}(k)} \right)(c_{k + 1}) =  \\
& \sum_{d_m \in \{0, \ldots , m - 1\}} \left( \sum_{\substack{\sigma = (c_1, \ldots , c_n) \in \mathbf{S}_n \\ c_m = d_m, \ldots c_n = d_n}} \prod_{k = 1}^{m - 2} f_k \left( x_{\sigma^{-1}(1)}, \ldots , x_{\sigma^{-1}(k)} \right)(c_{k + 1}) \right) \cdot f_{m - 1}\left( \left\{ x_{\sigma^{-1}(1)}, \ldots , x_{\sigma^{-1}(m - 1)} \right\} \right)(d_m),
\end{split}
\end{align}
where, for each $d_m \in \{0, \ldots , m - 1\}$, the set $\left\{ x_{\sigma^{-1}(1)}, \ldots , x_{\sigma^{-1}(m - 1)} \right\}$ does not depend on the choice of $\sigma = (c_1, \ldots , c_n) \in \mathbf{S}_n$ for which $c_m = d_m, \ldots c_n = d_n$.

By induction, therefore, (where the base case $m = 3$ is exactly the classical Proposition \ref{case2}, in light of Remark \ref{genstrat}) we may assume that, for each trailing code $(d_{m + 1}, \ldots , d_n)$, $m \in \{3, \ldots , n - 1\}$, each instance $d_m$ of the inner expression above features an inequality:
\begin{equation*}\sum_{\substack{\sigma = (c_1, \ldots , c_n) \in \mathbf{S}_n \\ c_m = d_m, \ldots c_n = d_n}} \prod_{k = 1}^{m - 2} f_k \left( x_{\sigma^{-1}(1)}, \ldots , x_{\sigma^{-1}(k)} \right)(c_{k + 1}) > 1,\end{equation*}
leaving unproven, only, the inductive step
\begin{equation*}\sum_{d_m \in \{0, \ldots m - 1\}} f_{m - 1}\left( \left\{ x_{\sigma^{-1}(1)}, \ldots , x_{\sigma^{-1}(m - 1)} \right\} \right)(d_m) > 1,\end{equation*}
where, again, the components $(d_{m + 1}, \ldots , d_n)$ are fixed.

Each set $\left\{ x_{\sigma^{-1}(1)} , \ldots , x_{\sigma^{-1}(m - 1)} \right\}$ in the above inequality differs from $\left\{ x_{\sigma^{-1}(1)}, \ldots , x_{\sigma^{-1}(m)} \right\}$ only in its lacking some particular element of the latter set (determined by the element $d_m$). This situation thus mimics that of expression (\ref{firstbreakdown}), and for notational convenience (that is, up to a reindexing), we adopt its setting in what follows. We are thus reduced to the inequality:
\begin{equation}\sum_{i = 1}^n f_{n - 1} \left( \{x_1, \ldots , x_n\} \backslash \{x_i\} \right)(i - 1) = \sum_{i = 1}^n \text{softmax}\left( 0, x_1, \ldots , \widehat{x_i}, \ldots , x_n \right)(i - 1) > 1,\label{inequality}\end{equation}
where, in the right-hand sum, a hat indicates that its corresponding argument is removed.

Pulling back the right-hand sum along the (surjective) recoordinatization
\begin{equation*}\mathring{\Delta}^n \rightarrow \mathbb{R}^n, \quad \left( p_0, \ldots , p_n \right) \mapsto \left( \text{log} \left( \frac{p_1}{p_0} \right), \ldots , \text{log} \left( \frac{p_n}{p_0} \right) \right) = \left( x_1, \ldots , x_n \right)\end{equation*}
from the interior of the $n$-simplex to $\mathbb{R}^n$ (cf. Remark \ref{exponential}), the right-hand expression simplifies, yielding the inequality (note that $p_1 > \cdots > p_n$):
\begin{equation*}\sum_{i = 1}^n \frac{p_{i - 1}}{p_0 + p_1 + \cdots + \widehat{p_i} + \cdots + p_n} > 1.\end{equation*}
On the other hand, we have that:
\begin{align*}
\sum_{i = 1}^n \frac{p_{i - 1}}{p_0 + p_1 + \cdots + \widehat{p_i} + \cdots + p_n} &> \sum_{i = 1}^n \frac{p_{i - 1}}{p_0 + \cdots + p_{n - 1} + \widehat{p_n}} & \text{(using } p_i > p_n, i \in \{1, \ldots , n - 1 \} \text{)} \\
&= 1. & \text{(common denominator)}
\end{align*}
This calculation completes the proof. 
\end{proof}

\begin{remark}
We note a correspondence between permutations $\sigma \in \mathbf{S}_n$ and upward paths through the Hasse diagram, or rather the cube graph $Q_n$ (see for example Biggs \cite{Biggs:1974aa}), on (the power set of) $\{x_1, \ldots x_n\}$, mediated by the Lehmer code. To each permutation $\sigma \in \mathbf{S}_n$ we attach that upward path through $Q_n$ given by $\varnothing \rightarrow \{x_{\sigma^{-1}(1)}\} \rightarrow \cdots \rightarrow \{x_{\sigma^{-1}(1)}, \ldots , x_{\sigma^{-1}(n - 1)}\} \rightarrow \{x_{\sigma^{-1}(1)}, \ldots , x_{\sigma^{-1}(n)}\}$. The specification of a trailing code $(d_{m + 1}, \ldots , d_n)$, $m \in \{1, ..., n - 1\}$, then, identifies exactly those $\sigma \in \mathbf{S}_n$ sharing a certain \textit{trailing path} $\{x_{\sigma^{-1}(1)}, \ldots , x_{\sigma^{-1}(m)}\} \rightarrow \cdots \rightarrow \{x_{\sigma^{-1}(1)}, \ldots , x_{\sigma^{-1}(n)}\}$.

The inductive structure (\ref{inductive}) thus relies on the fact that the subgraph beneath any fixed such trailing path is itself a cube graph, namely the cube graph $Q_m$ on $\{x_{\sigma^{-1}(1)}, \ldots , x_{\sigma^{-1}(m)}\}$, and that an inequality of the form (\ref{inequality}) holds regarding those nodes immediately beneath any fixed node $\{x_{\sigma^{-1}(1)}, \ldots , x_{\sigma^{-1}(m)}\}$.
\end{remark}

\appendix
\section{The case $n = 2$: further directions}

In this appendix, we study a further family of extensions of Cover's classical game, in which we grant Player 1 a more flexible form of choice.

A relationship has been noted between Cover's game and the so-called ``two-envelope paradox'' (see for example Clark and Shackel \cite{Clark:2000aa} for a thorough treatment), in which an intuitively compelling ``switching'' argument must be refuted. We note in particular the paper \cite{Samet:2004aa} of Samet, Samet, and Schmeidler, which places these two games into a common framework, whereby, in each case, an adversary begins by selecting a pair of points in the plane which straddles the main diagonal.

In a problem related to the two-envelope paradox, one might seek to explain the insolubility of the game in which:
\begin{quote}
Player 1 writes down any number on a clip of paper. Player 2 takes this slip and looks at the number. Player 1, then, randomly decides whether respectively to write a number which is higher than or lower than the first number onto a second slip. Player 2 must decide whether the number in his hand is the larger of the two numbers.
\end{quote}
in the face of this game's apparent similarity to the original game of Cover.

In light of these considerations, we propose a general adversarial game. Following \cite{Samet:2004aa}, we denote by $A = \left\{ (x_1, x_2) \in \mathbb{R}^2 \mid x_1 < x_2 \right\}$ and $B = \left\{ (x_1, x_2) \in \mathbb{R}^2 \mid x_1 > x_2 \right\}$, respectively, the regions strictly above and below the main diagonal in $\mathbb{R}^2$. We now have:
\begin{quote}
Player 1 selects a pair $(\mathbf{x}_A, \mathbf{x}_B) \in A \times B$, subject to a fixed \textit{ruleset} $R \subset A \times B$. Player 2 randomly chooses one point, say $\mathbf{x}$, from this pair and looks at its $x_1$-coordinate. Player 2 must decide whether $\mathbf{x}$ resides in $A$ or in $B$.

Player 2 seeks a strategy by which he can correctly state whether $\mathbf{x}$ belongs to $A$ or $B$ with probability \textit{strictly greater than one-half}.
\end{quote}

Cover's original game is the special case of this game in which $R = \left\{((a, b), (b, a) \in A \times B \mid a < b \right\}$ is the set consisting of those pairs of points which are mirror reflections about the diagonal. The ``paradoxically unsolvable'' game above corresponds to that $R = \left\{ ((x, x + \epsilon_A), (x, x - \epsilon_B)) \in A \times B \mid \epsilon_A, \epsilon_B > 0 \right\}$ consisting of those pairs $(\mathbf{x}_A, \mathbf{x}_B)$ which occupy a shared vertical line. We have a general question: for which rulesets $R$ does Player 2 have a dominant strategy?

\begin{definition}
To any ruleset $R \subset A \times B$ as defined above, we associate a set $X_R$ equipped with a binary relation $P_R$, defined as:
\begin{equation*}X_R = \bigcup_{(\mathbf{x}_A, \mathbf{x}_B) \in R} \{ \pi_1(\mathbf{x}_A) \} \cup \{ \pi_1(\mathbf{x}_B) \}, \quad P_R = \bigcup_{(\mathbf{x}_A, \mathbf{x}_B) \in R} \left( \pi_1(\mathbf{x}_B), \pi_1(\mathbf{x}_A) \right),\end{equation*}
where $\pi_1 \colon \mathbb{R}^2 \rightarrow \mathbb{R}$ is the projection onto the first coordinate. In other words, we declare $x_B P_R x_A$ just when $(x_A, x_B)$ arises as the pair of $x_1$-coordinates of some pair $\left( \mathbf{x}_A, \mathbf{x}_B\right) \in R$.

\end{definition}

This association has the property that the winning strategies under any ruleset $R$ correspond exactly to the order-preserving maps of $X_R$ into the unit interval.

\begin{proposition}
A strategy $f \colon X_R \rightarrow [0, 1]$ is dominant for Player 2 if and only if $x_B P_R x_A \implies f(x_B) > f(x_A)$.
\end{proposition}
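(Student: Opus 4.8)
The plan is to rerun the two-line computation of Proposition~\ref{case2}, now in both directions and bookkept through the relation $P_R$. First I would make explicit the dictionary between a strategy $f \colon X_R \rightarrow [0, 1]$ and Player 2's behavior: having drawn a point $\mathbf{x} = (x_1, x_2)$ from the adversary's pair and read off its first coordinate $x = x_1 \in X_R$, Player 2 announces ``$\mathbf{x} \in B$''---equivalently, ``the number in my hand is the larger''---with probability $f(x)$, and announces ``$\mathbf{x} \in A$'' with the complementary probability $1 - f(x)$. Here one uses only that, for $\mathbf{x} = (x_1, x_2)$, membership in $B$ versus $A$ is precisely the assertion that $x_1$ exceeds, respectively falls below, $x_2$.

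Next I would fix an arbitrary admissible adversary choice $(\mathbf{x}_A, \mathbf{x}_B) \in R$ and set $x_A = \pi_1(\mathbf{x}_A)$ and $x_B = \pi_1(\mathbf{x}_B)$, so that $x_B P_R x_A$ by the very definition of $P_R$. Conditioning on Player 2's fair coin, exactly as in Proposition~\ref{case2}, the probability of a correct answer against this choice equals
\begin{equation*}\frac{1}{2} \bigl( 1 - f(x_A) \bigr) + \frac{1}{2} f(x_B) = \frac{1}{2} + \frac{f(x_B) - f(x_A)}{2},\end{equation*}
which exceeds $\frac{1}{2}$ if and only if $f(x_B) > f(x_A)$.

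The equivalence then falls out of this identity. If $f$ satisfies $x_B P_R x_A \implies f(x_B) > f(x_A)$, then---since every pair of $R$ furnishes a $P_R$-related pair of first coordinates---the displayed quantity exceeds $\frac{1}{2}$ for every adversary choice, and $f$ is dominant. Conversely, if $f$ is dominant, then for any $x_B P_R x_A$ I would pick a witnessing pair $(\mathbf{x}_A, \mathbf{x}_B) \in R$ realizing these first coordinates; dominance against that particular choice forces the displayed quantity above $\frac{1}{2}$, whence $f(x_B) > f(x_A)$.

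I do not expect a genuine obstacle here; the only points demanding care are orienting $P_R$ consistently with the ``larger''/``smaller'' dictionary---so that the favorable inequality reads $f(x_B) > f(x_A)$ and not its reverse---and reading ``dominant'' in the sense of Proposition~\ref{case2}, namely as winning with probability strictly above one-half against every choice permitted by $R$. As a sanity check, the equivalence immediately recovers the insolubility of the ``shared vertical line'' ruleset: there $P_R$ relates certain values $x$ to themselves, and no $f$ valued in $[0, 1]$ can satisfy $f(x) > f(x)$.
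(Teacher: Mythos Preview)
Your proposal is correct and follows essentially the same route as the paper: identify $f$ with the randomized ``say $B$ with probability $f(x_1)$'' rule, compute the win probability against a fixed $(\mathbf{x}_A,\mathbf{x}_B)\in R$ as $\tfrac{1}{2}+\tfrac{1}{2}(f(x_B)-f(x_A))$, and read off the equivalence. Your write-up is slightly more explicit about the converse direction (picking a witnessing pair for a given $P_R$-relation), but the argument is the same.
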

\begin{proof}
We identify each map $f \colon X_R \rightarrow [0, 1]$ with that strategy which declares that any $\mathbf{x} = (x_1, x_2)$ resides in $B$ with probability $f(x_1)$.

Such a strategy is winning, moreover, if and only if, for each $\left( \mathbf{x}_A, \mathbf{x}_B\right) \in R$, the quantity
\begin{align*}
P(\text{win}) &= P(\text{chooses } \mathbf{x}_B) \cdot P(\text{states ``$B$''}) + P(\text{chooses } \mathbf{x}_A) \cdot P(\text{states ``$A$''}) \\
&= \frac{1}{2} \cdot f(x_B) + \frac{1}{2} \left( 1 - f(x_A) \right) \\
&= \frac{1}{2} + \frac{f(x_B) - f(x_A)}{2}
\end{align*}
strictly exceeds $\frac{1}{2}$. This is true if and only if $f(x_B) > f(x_A)$ for each pair $\left( \mathbf{x}_A, \mathbf{x}_B\right)$ permitted under $R$.
\end{proof}

A result of Herden \cite{Herden:1989aa} establishes necessary and sufficient conditions under which at least one such map exists:

\begin{theorem}[Herden]
An order-preserving map $f \colon X_R \rightarrow [0, 1]$ exists if and only if $P_R$ is acyclic and there exists a countable descending chain $\left\{ D_i \right\}_{i \in \mathbb{N}}$ of subsets of $X_R$ within which, for each pair $x_B P_R x_A$, some $D_i$ exists  which satisfies $x_B \in D_i$ but $x_A \not \in D_i$.
\end{theorem}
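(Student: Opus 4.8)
The plan is to prove the two implications separately. Throughout I read the asserted countable family $\{D_i\}_{i \in \mathbb{N}}$ as consisting of $P_R$-\emph{increasing} subsets of $X_R$---sets $D$ with the property that $x \in D$ and $x' \, P_R \, x$ together force $x' \in D$---this being the form such a system takes both when it is extracted from a given $f$ and when it is used to build one; concretely, the $D_i$ are to be thought of as the super-level sets $\{x \in X_R \mid f(x) > t\}$ of the sought map.

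\emph{Necessity.} Suppose a dominant $f \colon X_R \to [0,1]$ exists, so that $x_B \, P_R \, x_A \implies f(x_B) > f(x_A)$. Acyclicity of $P_R$ is then immediate, since a cycle $x_1 \, P_R \, x_2 \, P_R \cdots P_R \, x_m \, P_R \, x_1$ would force the impossible chain $f(x_1) > f(x_2) > \cdots > f(x_1)$. For the separating family, enumerate $\mathbb{Q} \cap (0,1)$ as $\{q_i\}_{i \in \mathbb{N}}$ and put $D_i = \{x \in X_R \mid f(x) > q_i\}$; each such set is $P_R$-increasing, and for any pair $x_B \, P_R \, x_A$ the density of the rationals supplies an index $i$ with $f(x_A) \le q_i < f(x_B)$, so that $x_B \in D_i$ but $x_A \notin D_i$.

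\emph{Sufficiency.} The idea is to reassemble $f$ from the $D_i$ as a weighted series of indicators. One first reduces to the case that the relation is a strict partial order by passing to the transitive closure $\prec$ of $P_R$---by acyclicity, $\prec$ is irreflexive, hence a genuine strict order---while checking that each $D_i$ remains $\prec$-increasing and that the family still separates every $\prec$-related pair: if $x \prec z$ is witnessed by a $P_R$-chain whose last step is $z \, P_R \, y$, then the $D_i$ separating the pair $(z, y)$ contains $z$, misses $y$, and, being $\prec$-increasing with $x \preceq y$, therefore misses $x$ as well. Then, with $\chi_i = \mathbf{1}_{D_i}$, define $f$ by $f(x) = \sum_{i = 0}^{\infty} 2^{-(i+1)} \chi_i(x) \in [0,1]$. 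Because each $D_i$ is $\prec$-increasing, every $\chi_i$ is nondecreasing along $\prec$, so $x \prec y$ yields $f(y) - f(x) \ge 0$; and, for the index $j$ separating the pair $(x, y)$, one has $\chi_j(y) - \chi_j(x) = 1$, which forces $f(y) - f(x) \ge 2^{-(j+1)} > 0$. Thus $f$ is strictly order-preserving for $\prec$, a fortiori for $P_R$.

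The step I expect to demand the most care is the passage to the transitive closure within the sufficiency direction---namely, verifying that the separating property is not destroyed---since this is where acyclicity does its real work and where the precise reading of the cited hypothesis gets pinned down: what is needed, and what the ``descending chain'' of the statement should be understood to furnish, is a countable family of super-level-type (increasing) sets that separates $P_R$-pairs. Once that is in hand, the indicator-series construction and the promotion of weak to strict monotonicity are routine, and the two halves of the argument---the level sets read off from $f$, and $f$ reconstituted from indicator sums---are visibly inverse to one another.
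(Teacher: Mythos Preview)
The paper does not prove this theorem at all: its proof is the single sentence ``This is \cite[Prop.~3.1]{Herden:1989aa}.'' Your proposal therefore differs from the paper's treatment simply by supplying an actual argument in place of a citation; the paper buys brevity and an appeal to established literature, while your write-up is self-contained and exhibits the mechanism (super-level sets in one direction, a weighted sum of indicators in the other).

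On correctness: your argument is sound under the reading you explicitly adopt (that each $D_i$ is $P_R$-up-closed). Two small remarks are worth making. First, the detour through the transitive closure $\prec$ is unnecessary---once each $D_i$ is $P_R$-increasing, the sum $f = \sum_i 2^{-(i+1)}\chi_{D_i}$ is already strictly $P_R$-monotone, which is exactly what is required; nothing about $\prec$ is ever used. Second, the phrase ``countable descending chain'' in the statement is most literally a nested family $D_0 \supseteq D_1 \supseteq \cdots$, and under \emph{that} reading the same $f$ also works without any up-closedness assumption: nesting forces each set $\{\,j : x \in D_j\,\}$ to be an initial segment of $\mathbb{N}$, of some length $n_x$, and the separation hypothesis then gives $n_{x_A} < n_{x_B}$ whenever $x_B \, P_R \, x_A$, whence $f(x_B) > f(x_A)$. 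So your reinterpretation is harmless---both readings yield the theorem---but it is not the only, or even the most literal, way to parse the hypothesis.
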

\begin{proof}
This is \cite[Prop. 3.1]{Herden:1989aa}.
\end{proof}

Cover's original game's corresponding ordered set is $\mathbb{R}$ with its usual ordering, the order-preserving maps of which into the unit interval are exactly the strictly increasing functions (cf. Strategy \ref{increasing} above). The unsolvable game, on the other hand, yields $X_R = \mathbb{R}$ with $P_R$ the diagonal relation $P_R = \left\{ (x, x) \in \mathbb{R}^2 \mid x \in \mathbb{R} \right\}$. This relation is evidently not acyclic.

\bibliography{bibliography} {}
\bibliographystyle{alpha}

\textsc{Benjamin E. Diamond}
\newline JPMorgan Chase \& Co.\par\nopagebreak
email: \texttt{benediamond@gmail.com}

\end{document}